\DeclareMathOperator{\Red}{Red}
\def\defn#1{{\sf #1}}
\newcommand{\edge}{\mathbin{\tikz [semithick, baseline=-0.2ex,-latex, ->] \draw [-] (0pt,0.4ex) -- (1em,0.4ex);}} 
\newcommand{\edgedir}{\mathbin{\tikz [semithick, baseline=-0.2ex,-latex, ->] \draw [->] (0pt,0.4ex) -- (1em,0.4ex);}} 
\newcommand{\edgedirback}{\mathbin{\tikz [semithick, baseline=-0.2ex,-latex, ->] \draw [<-] (0pt,0.4ex) -- (1em,0.4ex);}} 
\newcommand{\Bruhat}{\Omega}
\newcommand{\Bruhatdir}{\Omega_{\operatorname{dir}}}
\newtheorem{theorem}{Theorem}[section]
\newtheorem{proposition}[theorem]{Proposition}
\newtheorem{lemma}[theorem]{Lemma}
\theoremstyle{definition}
\newtheorem{remark}[theorem]{Remark}
\title[On the transitive Hurwitz action on decompositions of Coxeter elements]{A note on the transitive Hurwitz action on\\decompositions of parabolic Coxeter elements}
\author[B.~Baumeister]{Barbara Baumeister}
\address{Barbara Baumeister, Universit\"at Bielefeld, Germany}
\email{b.baumeister@math.uni-bielefeld.de}
\author[M.~Dyer]{Matthew Dyer}
\address{Matthew Dyer, University of Notre Dame, United States}
\email{dyer.1@nd.edu}
\author[C.~Stump]{Christian Stump}
\address{Christian Stump, Freie Universit\"at Berlin, Germany}
\email{christian.stump@fu-berlin.de}
\author[P.~Wegener]{Patrick Wegener}
\address{Patrick Wegener, Universit\"at Bielefeld, Germany}
\email{pwegener@math.uni-bielefeld.de}
\date{\today}
\begin{document}

\begin{abstract}
In this note, we provide a short and self-contained proof that the braid group 
on~$n$ strands acts transitively on the set of reduced factorizations of a 
Coxeter element in a Coxeter group of finite rank~$n$ into products of 
reflections.
We moreover use the same argument to also show that all factorizations of an 
element in a parabolic subgroup of~$W$ lie as well in this parabolic subgroup.
\end{abstract}

\maketitle

\section{Introduction}
Let $(W,T)$ be a \defn{dual Coxeter system} of finite rank $m$ in the sense 
of~\cite{Be03}.
This is to say that there is a subset $S \subseteq T$ with $|S| = m$ such that 
$(W,S)$ is a Coxeter system, and $T = \big\{ wsw^{-1} : w \in W, s \in S \big\}$ 
is the set of reflections for the Coxeter system $(W,S)$.
We then call~$(W,S)$ a \defn{simple system} for $(W,T)$.
Such simple systems for $(W,T)$ were studied by several authors, see 
e.g.~\cite{FHM06} and the references therein.
In particular, if $S$ is a simple system for $(W,T)$ then so is $wSw^{-1}$ for 
any $w \in W$.
It is moreover shown in~\cite{FHM06} that for important classes, all simple 
systems for $(W,T)$ are conjugate to one another in this sense.

For $w \in W$, we furthermore denote by $\Red_T(w)$ the set of reduced 
factorizations of~$W$ into reflections.
An element $c \in W$ is called a \defn{parabolic Coxeter element} for $(W,T)$ 
if there is a simple system~$S = \{ s_1,\ldots,s_m \}$ such that 
$c = s_1\cdots s_n$ for some $n \leq m$.
Similarly, we call the reflection subgroup generated by $\{s_1,\ldots,s_n\}$ a \defn{parabolic subgroup}.
The element~$c$ is moreover called a \defn{standard parabolic Coxeter 
element} for the Coxeter system $(W,S)$.

\begin{remark}
  Observe that this definition of parabolic Coxeter elements is more general 
  than usual.
  The simplest Coxeter group for which this definition is indeed more general 
  than considering conjugates of a fixed simple system is the finite Coxeter 
  group of type $H_2 = I_2(5)$ given by all linear transformations of the plane that leave a regular pentagon invariant.
  One choice for a simple system is given by two reflections through two 
  consecutive vertices of the pentagon, another choice is the product of two 
  reflections through two vertices with distance two.
  Both choices of simple systems generate the same set of reflections, even 
  though both are not conjugate.
\end{remark}

The \defn{braid group} on $n$ strands is the group $B_n$ with generators 
$\sigma_1,\ldots , \sigma_{n-1}$ subject to the relations
\begin{align*}
 \sigma_i \sigma_j & = \sigma_j \sigma_i \quad \text{for } |i-j| > 1,\\
\sigma_i \sigma_{i+1} \sigma_i & = \sigma_{i+1} \sigma_i \sigma_{i+1}.
\end{align*}
It acts on the set $T^n$ of $n$-tuples of reflections as
\begin{align*}
\sigma_i (t_1 ,\ldots , t_n ) &= (t_1 ,\ldots , t_{i-1} , \hspace*{5pt} t_i t_{i+1} t_i,
\hspace*{5pt} \phantom{t_{i+1}}t_i\phantom{t_{i+1}}, \hspace*{5pt} t_{i+2} ,
\ldots , t_n), \\
\sigma_i^{-1} (t_1 ,\ldots , t_n ) &= (t_1 ,\ldots , t_{i-1} , \hspace*{5pt} \phantom
{t_i}t_{i+1}\phantom{t_i}, \hspace*{5pt} t_{i+1}t_it_{i+1}, \hspace*{5pt} t_{i+2} ,
\ldots , t_n).
\end{align*}
For example, if $n=2$, then the action of $\sigma_{1}$ is described by 
\begin{equation*}\ldots\mapsto(srs,srsrs)\mapsto (s,srs)\mapsto (r,s)\mapsto
(rsr,r)\mapsto(rsrsr,rsr)\mapsto\ldots\end{equation*}
for any $r,s\in T$. Note that in this case, the $B_{2}$-orbit of $(r,s)$ is the set 
of all pairs  $(t_{1},t_{2})$ of reflections of the subgroup $\langle r,s\rangle$, 
such that $t_{1}t_{2}=rs$. 

\medskip

The following lemma is a direct consequence of the definition.
\begin{lemma}
\label{lem:HurwitzOnFactorizations}
  Let $W'$ be a reflection subgroup of~$W$ and let $T' = T \cap W'$ be the 
  set of reflections in~$W'$.
  Then the braid group on~$n$ strands acts on $\Red_{T'}(w)$ for $w \in W'$.
\end{lemma}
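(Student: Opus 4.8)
The plan is to check directly that each braid generator $\sigma_i$ and its inverse $\sigma_i^{-1}$ map $\Red_{T'}(w)$ into itself; since these elements generate $B_n$ as a group, this already shows that the $B_n$-action on $T^n$ restricts to an action on $\Red_{T'}(w)$. Here $n$ is understood to be the reflection length $\ell_{T'}(w)$ of $w$ with respect to the generating set $T'$ of $W'$, so that $\Red_{T'}(w)$ is exactly the set of tuples $(t_1,\ldots,t_n) \in (T')^n$ with $t_1\cdots t_n = w$; in particular, any factorization of $w$ into $n$ reflections from $T'$ is automatically reduced, which is the fact that makes the final step immediate.

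So fix $(t_1,\ldots,t_n) \in \Red_{T'}(w)$ and an index $i$. First I would verify that $\sigma_i(t_1,\ldots,t_n)$ again has all entries in $T'$: the only entries that change are $t_it_{i+1}t_i$ in position $i$ and $t_i$ in position $i+1$, and the latter is already in $T'$, while the former is a conjugate of the reflection $t_{i+1}$ --- hence a reflection of $W$ --- that moreover lies in the subgroup $W'$ because $t_i, t_{i+1} \in W'$; thus $t_it_{i+1}t_i \in T \cap W' = T'$. The verification for $\sigma_i^{-1}$ is the same with $t_i$ and $t_{i+1}$ interchanged.

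Next I would check that the braid move leaves the product invariant: $(t_it_{i+1}t_i)(t_i) = t_it_{i+1}$, and symmetrically $t_{i+1}(t_{i+1}t_it_{i+1}) = t_it_{i+1}$, so that multiplying out any tuple in the $B_n$-orbit still yields $w$. Combining the two observations, $\sigma_i^{\pm1}(t_1,\ldots,t_n)$ is a tuple of $n$ reflections from $T'$ whose product is $w$, and since $n = \ell_{T'}(w)$ it is reduced, i.e.\ it lies in $\Red_{T'}(w)$. Therefore every generator of $B_n$ and its inverse preserve $\Red_{T'}(w)$, which proves the lemma.

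As the statement already advertises, this is a direct consequence of the definitions, so there is no genuine difficulty; the only two points that need a word are that conjugation by an element of $W'$ keeps a reflection inside the subgroup $W'$ (hence inside $T' = T\cap W'$), and that reducedness is preserved for free because the $B_n$-action does not change the number of factors.
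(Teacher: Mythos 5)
Your argument is correct and is essentially the paper's own proof: you verify the same two observations (the product is unchanged under $\sigma_i^{\pm 1}$, and the entries stay in $T' = T \cap W'$ because the new entry is a conjugate of a reflection by an element of $W'$), with reducedness preserved for free since the number of factors is fixed. Nothing further is needed.
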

\begin{proof}
  Given a reduced factorization $w = t_1,\ldots,t_n$, and let 
  $\sigma_i(t_1,\ldots,t_n) = (t'_1,\ldots,t'_n)$.
  The lemma then follows from the two observations that 
  $t_1 \cdots t_n = t'_1 \cdots t'_n$ and $\{ t_1,\ldots,t_n \} \subseteq T'$ if and 
  only if $\{ t'_1,\ldots,t'_n \} \subseteq T'$.
\end{proof}
This action on $\Red_{T'}(w)$ is also known as the \defn{Hurwitz action}.
For finite Coxeter systems, the Hurwitz action was first shown to act 
transitively on $\Red_T(c)$ for a Coxeter element~$c$ in a letter from 
P.~Deligne to E.~Looijenga~\cite{De74}. The first published proof is due to 
D.~Bessis and can be found in~\cite{Be03}.
K.~Igusa and R.~Schiffler generalized this result to arbitrary Coxeter groups 
of finite rank; see~\cite[Theorem~1.4]{IS10}.
This transitivity has important applications in the theory of Artin groups, 
see~\cite{Be03,Di06}, and as well as in the representation theory of algebras; 
see~\cite{IS10,Ig11,HK13}.
\medskip

The aim of this note is to provide a simple proof of K.~Igusa and R.~Schiffler's 
theorem, based on arguments similar 
to those in~\cite{Dy01}.
We moreover emphasize that the condition on the Coxeter element~$c \in W$ 
in this note is slightly relaxed from the condition in the original theorem; 
compare~\cite[Theorem~1.4]{IS10}.

\begin{theorem}
\label{th:maintheorem1}
  Let $(W,T)$ be a dual Coxeter system of finite rank~$m$ and 
  let~$c = s_1\cdots s_n$ be a parabolic Coxeter element in~$W$.
  The Hurwitz action on $\Red_T(c)$ is transitive.
  In symbols, for each $(t_1,\ldots,t_n) \in T^n$ such that 
  $c = t_1 \cdots t_n$, there is a braid group element $b \in B_n$ such that
  $$b(t_1,\ldots,t_n) = (s_1,\ldots,s_n).$$
\end{theorem}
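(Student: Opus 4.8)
The plan is to prove the theorem by induction on $n = \ell_T(c)$, the common length of the reduced factorizations of $c$; the cases $n \le 1$ are immediate. The core of the argument will be a \emph{reduction lemma}: every $(t_1,\dots,t_n) \in \Red_T(c)$ is Hurwitz-equivalent to some tuple in $\Red_T(c)$ whose first entry equals $s_1$. Once this is available the induction closes quickly. If $(t_1',\dots,t_n')$ is Hurwitz-equivalent to $(t_1,\dots,t_n)$ with $t_1' = s_1$, then $(t_2',\dots,t_n')$ is a reduced factorization of $s_1 c = s_2\cdots s_n$, which is again a standard parabolic Coxeter element for $(W,S)$, now of reflection length $n-1$. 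By Lemma~\ref{lem:HurwitzOnFactorizations}, the subgroup of $B_n$ generated by $\sigma_2,\dots,\sigma_{n-1}$ --- a copy of $B_{n-1}$ acting only on the last $n-1$ strands --- acts on $\Red_T(s_1 c)$, and by the inductive hypothesis it takes $(t_2',\dots,t_n')$ to $(s_2,\dots,s_n)$. Composing braids then carries $(t_1,\dots,t_n)$ to $(s_1,\dots,s_n)$.

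For the reduction lemma I would work in the root system $\Phi$ of $(W,S)$, writing $\alpha_t \in \Phi^+$ for the positive root of $t \in T$, and I would use the depth function on $\Phi^+$ together with the root combinatorics of~\cite{Dy01} --- this is the point at which the ``arguments similar to those in~\cite{Dy01}'' enter. Two inputs drive the proof. First, $s_1 s_2 \cdots s_n$ is a reduced word in $S$, so $s_1$ is a left descent of $c$; equivalently $c^{-1}(\alpha_1) \in \Phi^-$. Writing $c^{-1} = t_n t_{n-1} \cdots t_1$ and following the root $\alpha_1$ along the partial products $t_j t_{j-1} \cdots t_1(\alpha_1)$, there is a first index $k$ at which this root becomes negative, and $\gamma := t_{k-1}\cdots t_1(\alpha_1)$ is then a positive root that $t_k$ sends to a negative one. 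Second, the Hurwitz generators act on a factorization by conjugating two neighbouring reflections by one another; in particular $\sigma_1^{-1}\sigma_2^{-1}\cdots\sigma_{k-1}^{-1}$ slides the $k$-th reflection, unchanged, into the first position (conjugating the preceding $k-1$ reflections by it). The idea is to combine such cyclic shifts with the depth-lowering move of~\cite{Dy01} --- any positive root of depth at least $2$ is turned into a positive root of strictly smaller depth by a suitable simple reflection --- so as to drag a well-chosen reflection to the front and then to shrink its root, step by step, down to $\alpha_1$, which makes the leading reflection equal to $s_1$.

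The step I expect to be the genuine obstacle is exactly this last reduction: one must ensure that the Hurwitz moves used to lower the depth of the leading reflection can be realised inside $\Red_T(c)$ without disturbing the tail of the tuple in an uncontrollable way, and that the procedure terminates with $s_1$ in front --- not merely with some simple reflection, which would not be sufficient to run the inductive step as set up above. Tracking how the depths of all roots occurring in a tuple change under Hurwitz moves is precisely the bookkeeping developed in~\cite{Dy01}, and apart from that everything is formal. As a by-product one obtains the companion statement from the abstract: the tuple $(s_1,\dots,s_n)$ already lies in $\Red_{T'}(c)$ with $T' = T \cap \langle s_1,\dots,s_n\rangle$, and Lemma~\ref{lem:HurwitzOnFactorizations} shows that $\Red_{T'}(c)$ is stable under $B_n$, so transitivity forces $\Red_T(c) = \Red_{T'}(c)$ --- that is, every reduced factorization of $c$ uses only reflections from the parabolic subgroup generated by $s_1,\dots,s_n$.
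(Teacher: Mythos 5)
Your outer induction is fine as a frame: granted the ``reduction lemma'', the step from $n$ to $n-1$ works, since $s_1c=s_2\cdots s_n$ is again a standard parabolic Coxeter element with $\ell_T(s_1c)=n-1$ and the copy of $B_{n-1}$ on the last $n-1$ strands acts on $\Red_T(s_1c)$. But the reduction lemma --- every tuple in $\Red_T(c)$ is Hurwitz-equivalent to one whose first entry is $s_1$ --- is essentially the whole content of the theorem, and you do not prove it; you explicitly flag it as the obstacle. Moreover, the mechanism you propose for it does not obviously exist: a Hurwitz move can only conjugate an entry of the tuple by a \emph{neighbouring entry of the same tuple}, so the depth-lowering device of~\cite{Dy01} (``some simple reflection $s$ strictly lowers the depth of the leading positive root'') is not available unless that particular $s$, or a suitable conjugate of it, happens to sit next to the leading reflection in the current tuple --- and nothing in your sketch guarantees this; the relevant simple reflection need not even lie in the parabolic subgroup $W_c$ that (a posteriori) contains all entries. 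The cyclic-shift moves you describe do preserve the orbit, but iterating them changes which reflections are adjacent to which, and you give no monovariant ensuring the process terminates with $s_1$ (rather than some other simple reflection, or not at all). So as written there is a genuine gap exactly where you predicted one.

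For comparison, the paper avoids this head-on reduction. Its Proposition~\ref{prop:directedpath} replaces any ``up--down'' segment $z \edgedir zt_1 \edgedirback zt_1t_2$ of the associated path in the Bruhat graph by a segment inside the dihedral reflection subgroup $\langle t_1,t_2\rangle$ (using \cite[Theorem~2.1]{Dy01} to reduce to the rank-two case); this \emph{is} realizable by Hurwitz moves because only the two adjacent entries are touched, and the total sum of the lengths $\ell$ of the path's vertices strictly decreases, which supplies the termination argument your sketch lacks. This yields a representative whose path $e \edgedir t_1' \edgedir \cdots \edgedir c$ is directed. Then, since $\ell(c)=\ell_T(c)=n$ by Lemma~\ref{lemma:factorizationlength}, the strong exchange condition shows every directed path from $e$ to $c$ amounts to inserting the letters $s_1,\ldots,s_n$ one at a time, i.e.\ is encoded by a permutation, and adjacent transpositions of that permutation are exactly Hurwitz generators; this is what ultimately puts $s_1$ (and then $s_2,\ldots,s_n$) in place. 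If you want to salvage your plan, you would need to prove your reduction lemma by some such local, two-entry replacement argument with an explicit decreasing statistic --- at which point you have essentially reconstructed the paper's proof.
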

By the observation in Lemma~\ref{lem:HurwitzOnFactorizations}, this theorem 
has the direct consequence that the parabolic subgroup 
$\langle s_1,\ldots,s_n\rangle \trianglelefteq W$ 
does indeed not depend on the particular factorization $c = s_1 \cdots s_n$ 
but only on the parabolic Coxeter element~$c$ itself.
We thus denote this parabolic by $W_c := \langle t_1,\ldots,t_n \rangle$ for 
any factorization $c = t_1 \cdots t_n$.
We moreover obtain that $\Red_T(c) = \Red_{T'}(c)$ with $T' = W_c \cap T$ 
being the set of reflections in the parabolic subgroup $W_c$.
The main argument in the proof of this theorem 
(see Proposition~\ref{prop:directedpath} below) will also imply the following 
theorem that extends this direct consequence to all elements in
a parabolic subgroup.
\begin{theorem}
\label{th:maintheorem2} Let $W'$ be a parabolic subgroup of $W$. 
Then for any $w\in W'$, 
  $$\Red_T(w) = \Red_{T'}(w),$$
  where $T' = W' \cap T$ is the set of reflections in $W'$.
\end{theorem}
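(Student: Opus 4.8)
The plan is to derive Theorem~\ref{th:maintheorem2} from Proposition~\ref{prop:directedpath} by a short induction on reflection length, after first reducing the statement to a single claim about which reflections can occur in a reduced factorization. Note that the inclusion $\Red_{T'}(w)\subseteq\Red_T(w)$ is not completely formal, since a priori the length $\ell_T(w)$ of a $T$-reduced factorization of $w$ could be strictly smaller than the corresponding length $\ell_{T'}(w)$ computed inside $W'$. Both inclusions, however, follow at once from the claim that \emph{every $T$-reduced factorization of $w$ uses only reflections from $T'=W'\cap T$}: such a factorization is then a $T'$-factorization of $w$ of length $\ell_T(w)$, which forces $\ell_{T'}(w)\le\ell_T(w)$, while $\ell_T(w)\le\ell_{T'}(w)$ holds trivially because $T'\subseteq T$. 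Hence the two lengths coincide, every $T$-reduced factorization is then also $T'$-reduced and conversely, and $\Red_T(w)=\Red_{T'}(w)$. So it remains to prove: if $w\in W'$ and $w=t_1\cdots t_k$ is a $T$-reduced factorization, then $t_1,\dots,t_k\in T'$.

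I would prove this by induction on $k=\ell_T(w)$, the case $k=0$ being vacuous. For $k\ge 1$ the crucial point, and the only place where Proposition~\ref{prop:directedpath} enters, is that the first reflection $t_1$ of the factorization already belongs to $W'$. This is the kind of statement the proposition is designed to deliver: a Hurwitz move replaces two adjacent entries $t_i,t_{i+1}$ by $t_it_{i+1}t_i,t_i$, so it preserves both the product of the tuple and the subgroup generated by its entries, and tracing the directed paths of $\Bruhatdir$ one controls exactly which reflections can occur below $w$ and concludes that each of them lies in every parabolic subgroup of $W$ containing $w$; applied to $W'$ this gives $t_1\in W'$, and since $t_1\in T$ also $t_1\in T'$. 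Granting this, $t_1w=t_2\cdots t_k$ is a $T$-factorization of the element $t_1w\in W'$, and it is reduced because $\ell_T(t_1w)\ge\ell_T(w)-1=k-1$ while the displayed factorization has length $k-1$; the induction hypothesis applied to $t_1w$ then gives $t_2,\dots,t_k\in T'$, and together with $t_1\in T'$ this completes the induction and, via the reduction of the first paragraph, the proof.

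The main obstacle is precisely the highlighted step, i.e.\ turning the combinatorial output of Proposition~\ref{prop:directedpath} into the assertion that a reflection occurring in a $T$-reduced factorization of $w\in W'$ must itself lie in $W'$. In a finite Coxeter group this is geometrically transparent, as the roots of such reflections lie in the moved space of $w$, which is contained in the span of the simple roots of $W'$; but for an infinite Coxeter group of finite rank no such dimension count is available and one really has to use the directed-path argument. A secondary, purely bookkeeping point to check is that $t_1w$ lies in the \emph{same} parabolic subgroup $W'$ (it does, since $w,t_1\in W'$) and that deleting $t_1$ from a $T$-reduced factorization of $w$ leaves a $T$-reduced factorization of $t_1w$ of length one smaller, so that the induction is legitimately set up.
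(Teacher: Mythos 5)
Your reduction in the first paragraph is fine (and the observation that $\Red_{T'}(w)\subseteq\Red_T(w)$ is not formal, because $\ell_{T'}(w)$ could a priori exceed $\ell_T(w)$, is a good one), and the inductive bookkeeping in the second paragraph is legitimate. But the proof has a genuine gap, which you yourself flag: the entire content of the theorem is the claim that a reflection occurring in a $T$-reduced factorization of $w\in W'$ lies in $W'$, and for this you offer only the assertion that ``tracing the directed paths of $\Bruhatdir$ one controls exactly which reflections can occur below $w$ and concludes that each of them lies in every parabolic subgroup of $W$ containing $w$.'' No argument is given for this, and it is not a routine consequence of the statement of Proposition~\ref{prop:directedpath}; it is exactly the step the paper's proof is organized around. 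Without it your induction never starts, since even the base case $t_1\in W'$ is unproven.

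What is actually needed, and what the paper supplies, is the following chain. Since $W'$ is parabolic in the sense of this note, one may choose the simple system $S$ so that $W'$ is a \emph{standard} parabolic subgroup, and work with the corresponding $\Bruhatdir$. Given a $T$-reduced factorization $w=t_1\cdots t_k$, Proposition~\ref{prop:directedpath} with $x=e$ replaces it, within its Hurwitz orbit, by a factorization $w=t'_1\cdots t'_k$ whose path $e\edgedir t'_1\edgedir t'_1t'_2\edgedir\ldots\edgedir w$ is directed in $\Bruhatdir$. Every vertex of such a directed path lies below $w$ in Bruhat order, and since all $S$-reduced expressions of $w\in W'$ use only the simple generators of $W'$ (the subword property for standard parabolics), all these vertices lie in $W'$; hence each $t'_i$, being a quotient of two consecutive vertices, lies in $T'=W'\cap T$. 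Finally, Lemma~\ref{lem:HurwitzOnFactorizations} says that the property ``all entries lie in $T'$'' is invariant under Hurwitz moves, so it transfers back from $(t'_1,\ldots,t'_k)$ to the original tuple $(t_1,\ldots,t_k)$; the paper also invokes \cite[Theorem~2.1]{Dy01} to identify $\Bruhatdir\big|_{W'}$ with the directed Bruhat graph of $W'$ when comparing $T$- and $T'$-reduced factorizations. Note that once this chain is in place it yields $t_1,\ldots,t_k\in T'$ all at once, so your induction on $k$ is superfluous rather than wrong; the defect is that the one step carrying all the weight is asserted, not proved.
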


\section{The proof}
For the  proof of the two theorems, we fix a Coxeter system $(W,S)$.
Denote by~$\ell = \ell_S$ and by~$\ell_T$ the length function on~$W$ with 
respect to the simple generators~$S$ and with respect to the generating 
set~$T$, respectively.
Since $S \subseteq T$, we have that $\ell_T(w) \leq \ell(w)$ for all~$w \in W$.

\medskip

The following lemma provides an alternative description of standard 
parabolic Coxeter elements.

\begin{lemma}\label{lemma:factorizationlength}
  An element $w \in W$ is a standard parabolic Coxeter element for 
  $(W,S)$ if and only if $\ell_T(w) = \ell(w)$.
\end{lemma}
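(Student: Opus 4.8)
The plan is to prove the two implications separately. The implication that $\ell_T(w)=\ell(w)$ forces $w$ to be a standard parabolic Coxeter element is short and elementary. The converse reduces, via a parity argument, to showing that a product of $n$ distinct simple reflections has reflection length exactly $n$; pinning down that this length is not $n-2$ is where the real work sits.

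I would first record a \emph{no-repetition lemma}: in a reduced $T$-factorization $w=t_1\cdots t_k$ (so $k=\ell_T(w)$) the reflections $t_1,\ldots,t_k$ are pairwise distinct. For if $t_a=t_b$ with $a<b$, then conjugating $t_a$ successively past $t_{a+1},\ldots,t_{b-1}$ gives
$$t_at_{a+1}\cdots t_{b-1}t_b=(t_at_{a+1}t_a)(t_at_{a+2}t_a)\cdots(t_at_{b-1}t_a)$$
since $t_at_b=t_at_a=1$, and substituting this back rewrites $w$ as a product of $k-2$ reflections, contradicting minimality. Granting the lemma: if $\ell_T(w)=\ell(w)=:k$, take any $S$-reduced word $w=s_{i_1}\cdots s_{i_k}$; it is a $T$-factorization of length $k=\ell_T(w)$, hence reduced, so the $s_{i_j}$ are pairwise distinct, i.e.\ $w$ is a standard parabolic Coxeter element.

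For the converse, let $w=s_1\cdots s_n$ with $s_1,\ldots,s_n\in S$ pairwise distinct. As $\ell_T(w)\le\ell(w)\le n$, once $\ell_T(w)=n$ is known we get $\ell(w)=\ell_T(w)=n$ for free, so it suffices to prove $\ell_T(w)=n$. I would argue by induction on $n$, the cases $n\le1$ being clear. For $n\ge2$ set $v=s_2\cdots s_n$, so $\ell_T(v)=n-1$ by induction. Since $w=s_1v$ with $s_1\in T$, subadditivity of $\ell_T$ gives $|\ell_T(w)-\ell_T(v)|\le1$, and the sign character $W\to\{\pm1\}$ (every reflection $\mapsto-1$) gives $\ell_T(w)\equiv n\pmod2$; hence $\ell_T(w)\in\{n-2,n\}$. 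The point is to exclude $\ell_T(w)=n-2$.

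Suppose $\ell_T(w)=n-2$, i.e.\ $\ell_T(s_1v)=\ell_T(v)-1$. Since $\ell_T(tv)<\ell_T(v)$ exactly when $v$ has a reduced $T$-factorization beginning with $t$ (the needed direction being immediate: prepend $t$ to a reduced factorization of $tv$), the parabolic Coxeter element $v=s_2\cdots s_n$ would admit a reduced $T$-factorization starting with the ``foreign'' reflection $s_1$. Excluding this is the main obstacle. My approach would be to establish --- as part of the analysis behind Proposition~\ref{prop:directedpath}, in the spirit of~\cite{Dy01} --- that every reflection occurring in a reduced $T$-factorization of an element of a standard parabolic subgroup $W_J$ lies in $W_J$. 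Applied to $W_{\{s_2,\ldots,s_n\}}$ this forces $s_1\in W_{\{s_2,\ldots,s_n\}}$; but the positive root of every reflection of $W_{\{s_2,\ldots,s_n\}}$ lies in $\operatorname{span}(\alpha_2,\ldots,\alpha_n)$, while the root of $s_1$ is $\alpha_1$, which is not in that span since the simple roots are linearly independent --- a contradiction. I expect this ``minimal factorizations stay inside the parabolic'' statement to be the genuinely hard part (it is essentially Theorem~\ref{th:maintheorem2}), so some care is needed to prove the portion used here without circularly invoking the main theorems.
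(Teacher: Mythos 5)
Your first implication is correct and is a nice elementary alternative to the paper's argument: the observation that a repeated reflection in a $T$-factorization can be conjugated through the intermediate factors so as to cancel, shortening the factorization by two, shows that a reduced $T$-factorization has pairwise distinct factors; applied to an $S$-reduced word of length $\ell(w)=\ell_T(w)$ this gives that $w$ is a product of pairwise distinct simple reflections. The paper instead disposes of both implications at once by quoting Dyer's description of $\ell_T(w)$ as the minimal number of letters one can delete from a fixed $S$-reduced word for $w$ so as to obtain an expression for the identity (\cite{Dy01}, Theorem~1.1).

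The converse implication, however, is not actually proved in your proposal, and this is a genuine gap rather than a routine omission. Your reduction (induction on $n$, subadditivity of $\ell_T$, and the sign character) correctly leaves you with excluding $\ell_T(s_1\cdots s_n)=n-2$, and you correctly translate this into: the standard parabolic Coxeter element $v=s_2\cdots s_n$ would admit a reduced $T$-factorization beginning with the reflection $s_1\notin W_{\{s_2,\ldots,s_n\}}$. But the statement you invoke to rule this out --- every reflection occurring in a reduced $T$-factorization of an element of a standard parabolic subgroup lies in that subgroup --- is precisely the standard-parabolic case of Theorem~\ref{th:maintheorem2}, i.e.\ the substantial content of the note, and you neither prove it nor point to a precise citable result; you explicitly defer it. Nothing soft will close this: the tempting linear-algebra bound via the codimension of the fixed space fails for infinite Coxeter groups (in affine types a Coxeter element fixes the radical of the bilinear form, so that codimension is strictly smaller than $n$), so input of the strength of Dyer's results is unavoidable here. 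Two ways to finish along your lines: (i) quote \cite{Dy01}, Theorem~1.1, directly, which settles the converse in one line --- a nonempty product of pairwise distinct simple reflections is reduced, hence not the identity, so no proper deletion from the word $s_1\cdots s_n$ yields $e$ and thus $\ell_T(w)=n=\ell(w)$; or (ii) first establish Theorem~\ref{th:maintheorem2} via Proposition~\ref{prop:directedpath}, which would in fact not be circular, since the paper's proof of that theorem nowhere uses this lemma, and then your root-theoretic contradiction goes through. As written, the decisive step of the hard direction is missing.
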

\begin{proof}
  Given a reduced expression $w = s_{i_1} \ldots s_{i_k}$, it was shown 
  in~\cite[Theorem~1.1]{Dy01} that $\ell_T(w)$ is given by the minimal 
  number of simple generators that can be removed from 
  $s_{i_1} \ldots s_{i_k}$ to obtain the identity.
  This yields that $\ell_T(w) = k = \ell(w)$ if and only if 
  $s_{i_1} \ldots s_{i_k}$ does not contain any generator twice.
\end{proof}
Define  the \defn{Bruhat graph} $\Bruhat$ for the dual Coxeter system 
$(W,T)$ as the undirected graph on vertex set $W$ with edges given by 
$w \edge wt$ for $t \in T$.
For any   factorization $w = t_1 \cdots t_n \in W$ with $t_{i}\in T$  and  any 
$x\in W$, there is  a corresponding path 
$$x \edge xt_1 \edge xt_1t_2 \edge \ldots \edge xt_1\cdots t_n = xw$$ 
from $x$ to $xw$
in~$\Bruhat$. 
It is clear that  the factorization of $w$ is reduced  if and only if the 
corresponding path from $x$ to $xw$ has minimal length among paths from 
$x$ to $xw$ for some (equivalently, every) $x\in W$.
The  simple system $(W,S)$ induces an orientation on $\Bruhat$ given by 
$w \edgedir wt$ if $\ell(w) < \ell(wt)$.
We denote the resulting \defn{directed Bruhat graph} by $\Bruhatdir$.

\bigskip

The proof of the two main results is based on the case $x=e$ of the  following 
proposition.
\begin{proposition}
\label{prop:directedpath}
  Let $(W,S)$ be a Coxeter system.
  Moreover, let $w = t_1 \cdots t_n \in W$ be a reduced factorization of an 
  element in~$W$ into reflections, and let
  $$x \edge xt_1 \edge xt_1t_2 \edge \ldots \edge xt_1\cdots t_n = xw$$
  be the corresponding path in $\Bruhat$ starting at an element $x \in W$.
  Then there is a factorization $w = t'_1 \cdots t'_n$ in the Hurwitz orbit of the 
  factorization $t_1 \cdots t_n$ such that the corresponding path in $\Bruhatdir$
  starting at $x$ is first decreasing in length, then increasing; more precisely, it 
   is of the form 
  $$ x\edgedirback xt'_1 \edgedirback xt'_1t'_2\edgedirback \ldots  
  \edgedirback xt_{1}'\cdots t_{i}'\edgedir  \ldots \edgedir xt'_1
   \cdots t'_n = xw$$ for some (unique) integer  $i$ with $0\leq i\leq n$.
  In the special case $x=e$, this gives a directed  path 
  $$ e\edgedir t'_1 \edgedir t'_1t'_2 \edgedir \ldots 
  \edgedir t'_1 \cdots t'_n = w$$
    in $\Bruhatdir$.
\end{proposition}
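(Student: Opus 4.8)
The plan is to induct on the "defect" of the path, i.e. on the quantity $d = \ell(x) - \min_j \ell(xt_1\cdots t_j)$ measuring how far the path dips below its starting point, combined with a secondary induction on the position of the first "ascent after a descent". The base case is when the path is already of the stated V-shape (first decreasing, then increasing), in which case there is nothing to do. So assume the path contains, somewhere, a pattern where a step of length-increasing type is immediately followed by a step of length-decreasing type, or more precisely pick the smallest index $i$ such that the partial products satisfy $\ell(xt_1\cdots t_{i-1}) < \ell(xt_1\cdots t_i)$ while the path has not yet "finished descending"; equivalently, locate a local configuration $y \edgedir yt_i \edgedirback yt_it_{i+1}$ with $\ell(yt_it_{i+1})<\ell(yt_i)$ and $\ell(yt_it_{i+1}) \le \ell(y)$, where $y = xt_1\cdots t_{i-1}$. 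The idea is to apply $\sigma_i$ or $\sigma_i^{-1}$ to swap these two steps: replacing $(t_i,t_{i+1})$ by $(t_it_{i+1}t_i, t_i)$ changes the intermediate vertex $yt_i$ to $y t_i t_{i+1} t_i = (yt_{i+1}t_i^{-1})\cdots$, hmm — more carefully, under $\sigma_i$ the new partial product after the first of the two steps is $y\,(t_it_{i+1}t_i)$, and one checks $y(t_it_{i+1}t_i) = (y t_i t_{i+1} t_i)$, so the new intermediate vertex is the old endpoint $yt_it_{i+1}$ reflected — one must verify that this move strictly decreases the defect or pushes the bad position to the right.

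Concretely, the key local lemma to establish is: given an edge path $y \edgedir yt \edge yts = z$ in $\Bruhat$ with $\ell(z) < \ell(yt)$, the Hurwitz move sending $(t,s)\mapsto(tst,t)$ (or its inverse) produces the path $y \edge y' \edge z$ with $y' = yts t = z t$, and we have $\ell(y') \le \ell(y)$, i.e. the new path from $y$ to $z$ first goes down (or stays) then down — turning a down-up-... no; I should be careful about which of the two new steps is which. The honest statement: if at position $i$ the path does $\edgedir$ then $\edgedirback$ in $\Bruhatdir$ (an ascent then a descent), then after applying the appropriate $\sigma_i^{\pm1}$ the path does $\edgedirback$ then $\edgedir$ at those positions, i.e. the "peak" is lowered, and moreover no new peaks are created to the left — so the statistic "number of indices $j$ where the path ascends in $\Bruhatdir$ before its global minimum is reached" strictly decreases. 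This is where the real content lies: one needs that passing a reflection $t$ past a $\edgedirback$-step cannot create a fresh ascent-then-descent to the left of position $i$, which follows because the vertices strictly before position $i$ are unchanged by $\sigma_i$, and the vertex at position $i$ itself is replaced by one of strictly smaller length (the old endpoint $z$), so the step into position $i-1$—which was $\edgedir$ into the old higher vertex—becomes $\edgedir$ into a lower vertex or possibly $\edgedirback$, in either case not introducing a problem because we chose $i$ minimal.

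The hard part, as just indicated, is proving that this local exchange genuinely makes monotone progress toward the V-shape: one must pin down the right monovariant. I expect the cleanest choice is the pair $\big(\max_j \ell(xt_1\cdots t_j) \text{ over the ``descending-then-first-ascent'' prefix},\ \text{position of the first } \edgedir\big)$ ordered lexicographically, or alternatively simply $\sum_j \ell(xt_1\cdots t_j)$, and to show each exchange strictly decreases it; finiteness of the Hurwitz orbit (a consequence of $\ell_T$ being bounded on the orbit, since all factorizations have the same product $w$ and the same length $n$) then guarantees termination. Uniqueness of $i$ in the final statement is automatic from the strict inequalities defining $\edgedir$ versus $\edgedirback$ once the path has the V-shape. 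Finally, the special case $x = e$ drops out immediately: since $\ell(e) = 0$ is the global minimum of $\ell$, the initial descending segment must be empty, so $i = 0$ and the whole path is $\edgedir$-increasing, which is exactly the displayed conclusion.
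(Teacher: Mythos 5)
Your overall strategy is the same as the paper's (remove local peaks by Hurwitz moves confined to two adjacent letters, use a monovariant to force termination, and get the $x=e$ case because no edge of $\Bruhatdir$ points down from $e$), but the one step carrying all the content is missing, and the specific local claims you do state are false. At a peak $y \edgedir yt \edgedirback yts$, a single $\sigma_i^{\pm1}$ replaces the intermediate vertex $yt$ by $y(tst)$ or by $ys$; it is not true that the result $y'$ satisfies $\ell(y')\le\ell(y)$, nor that the new local shape is $\edgedirback$ then $\edgedir$. Already in the infinite dihedral group $\langle a,b\rangle$ take $y=e$, $t=aba$, $s=ababa$, so $yt=aba$ has length $3$ and $yts=ba$ has length $2$: one of the two single moves gives the factorization $(a,aba)$ with intermediate vertex $a$ of length $1>0=\ell(y)$ and new shape $\edgedir\edgedir$, the other gives intermediate vertex $ababa$ of length $5$. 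The honest local lemma is the paper's: at a peak one can re-factor $t_it_{i+1}=t_i't_{i+1}'$ \emph{inside the dihedral reflection subgroup} $\langle t_i,t_{i+1}\rangle$ (an arbitrary $B_2$-word, not necessarily one move) so that the new intermediate vertex is strictly shorter than the old one, equivalently the new shape is not up--down. Proving this is exactly where the paper invokes Dyer's Theorem~2.1: the orientation of $\Bruhat$ restricted to a coset of $\langle t_i,t_{i+1}\rangle$ is the Bruhat orientation of that dihedral group with respect to its canonical simple system, which reduces the claim to a direct dihedral check. Your proposal supplies neither this reduction nor any substitute, and explicitly defers the "hard part" (pinning down why the exchange makes progress); the remark that vertices to the left of position $i$ are unchanged does not address it, since the difficulty sits at position $i$ itself.

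Two further problems. Termination "by finiteness of the Hurwitz orbit" is wrong: in infinite Coxeter groups $\Red_T(w)$, and hence the orbit, can be infinite (a rotation in the infinite dihedral group already has infinitely many factorizations into two reflections). Termination must come from a strictly decreasing nonnegative monovariant such as $\sum_j\ell(xt_1'\cdots t_j')$ — the paper's choice — but showing it strictly decreases is again precisely the unproven local lemma. (As an aside, once Dyer's theorem is available one can in fact show that a single well-chosen $\sigma_i^{\pm1}$ lowers the peak in ambient length, because the dihedral picture produces an element of the opposite parity class lying strictly between the new and old intermediate vertices; so your one-move-at-a-time variant is salvageable, but only with the same key input the paper uses.) Finally, the move requires $t_i\neq t_{i+1}$; this needs the observation, made in the paper, that it follows from the factorization being $T$-reduced.
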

\begin{proof}
  First consider two distinct reflections $t_1$ and $t_2$ and an element 
  $z \in W$ such that
   $z \edgedir zt_1 \edgedirback zt_1t_2$ in $\Bruhatdir$.
  We claim that  there exist reflections $t'_1,t'_2 \in \langle t_1,t_2 \rangle$ 
  with $t_1t_2 = t'_1t'_2$ such that 
  $z \edgedir zt'_1 \edgedir zt'_1t'_2$ or $z \edgedirback zt'_1
  \edgedirback zt_{1}'t_{2}'$
  or $z \edgedirback zt'_1\edgedir zt_{1}'t_{2}'$.
    This implies, by the comment before Lemma 
    \ref{lem:HurwitzOnFactorizations}, that  one can get from the factorization 
    $t_1 t_2$ to the factorization $t'_1 t'_2$ inside $W' = \langle t_1,t_2 \rangle$ 
    by braid moves, and hence in particular  that $W' = \langle t'_1,t'_2 \rangle$.
    Moreover, one has  $\ell(zt_{1}')< \max(\ell(z),\ell(zt_{1}t_{2}))<\ell(zt_{1})$.

  To prove the claim, consider the coset $z\langle t_1,t_2 \rangle$ in $W$. 
  By~\cite[Theorem~2.1]{Dy01}, the proof immediately reduces to the case 
  $W' = \langle t_1,t_2 \rangle$ dihedral.
  We  check this case directly.
  To this end, let $s'_1,s'_2$ be the Coxeter generators of $W'$, and observe 
  that any reflection (element of odd length) and any rotation (element of even 
  length) in $W'$ are joined by an edge in $\Bruhat$, which in $\Bruhatdir$ is 
  oriented towards the element of greater length  with respect  to the generating
   set $s'_1,s'_2$.
  Therefore given $z \in W'$, there are three situations: either 
  $\ell(z) < \ell(zt_1t_2)$, or $\ell(z) > \ell(zt_1t_2)$, or $\ell(z) = \ell(zt_1t_2)$.
 This implies that one can choose $t'_1$ and $t'_2$ with $t'_1 t'_2 = t_1t_2$ in 
 the three situations such that $z \edgedir zt'_1 \edgedir zt'_1t'_2$,  
 $z \edgedirback zt'_1 \edgedirback zt'_1t'_2$ or 
  $z \edgedirback zt'_1 \edgedir zt'_1t'_2$ respectively (note that in the third 
  case, one has  $z\neq e$ or else $l(zt_{1}t_{2})=l(z)$ implies  $t_{1}=t_{2}$, 
  contrary to assumption).

Consider    the path  in $\Bruhat$  attached to $w=t_{1}\cdots t_{n}$ and 
beginning at $x$. Any subpath $z\edge zt_{1}\edge zt_{1}t_{2}$ as in the claim 
may be replaced by a path $z\edge zt'_{1}\edge zt'_{1}t'_{2}$ as there, to give 
a new path from $x$ to $xw$ of the same length $n$; we call this a 
``replacement.'' Apply to the original path  a sequence of successive 
replacements.  Any path so obtained corresponds to the path beginning at $x$
 attached to some $T$-reduced expression of $w$ in the same Hurwitz orbit 
 as $t_{1}\cdots t_{n}$, and is a shortest path in $\Bruhat$ from $x$ to $xw$. 
 Note that  a replacement  of any subpath $z \edgedir zt_{1}' 
 \edgedirback zt'_{1}t'_{2}$ of such a path  is possible since the path's minimal 
 length  implies that  $t_{1}'\neq t_{2}'$.   Each replacement decreases the total 
 sum of the $\ell$-lengths of the vertices of the path, so eventually one obtains 
 a path in which no further replacements  are possible i.e. of the desired 
 decreasing-then-increasing form.   Finally, if $x=e$, then $i=0$ since  there 
 are no  paths $e\edgedirback t$.
\end{proof}

Given this proposition, we are finally in the position to prove the two main 
results of this note.
\begin{proof}[Proof of Theorem~\ref{th:maintheorem2}]
  Consider a  Coxeter system $(W,S)$,  a reflection subgroup 
  $W' \trianglelefteq W$.  It is known that the directed Bruhat graph for $W'$ 
  corresponding to the simple system of $W'$ induced by $S$ is the full 
  subgraph  $\Bruhatdir\big|_{W'}$ of $\Bruhatdir$ on vertex set  $W'$; 
  see~\cite[Theorem~2.1]{Dy01}.
 
 Let $w \in W'$. Then 
 Lemma~\ref{lem:HurwitzOnFactorizations},
 ~Proposition~\ref{prop:directedpath} (with $x=e$), and the discussion before 
 Proposition \ref{prop:directedpath} imply that $\Red_T(w) = \Red_{T'}(w)$ if 
 and only if every shortest directed path from $e$ to $w$ in $\Bruhatdir$ lies  
 inside $\Bruhatdir\big|_{W'}$.   
 
 Now assume that $W'$ is the standard parabolic subgroup generated by 
 some subset of $S$.  Then it is well known that every $S$-reduced 
 expression for $w\in W'$ is actually inside $W'$.
  It therefore follows that in this situation any shortest directed path from
   $e$ to $w$ in $\Bruhatdir$ indeed lies  inside $\Bruhatdir\big|_{W'}$.
  The theorem  follows by the above equivalence.
\end{proof}

\begin{proof}[Proof of Theorem~\ref{th:maintheorem1}]
  Again, fix a parabolic Coxeter element~$c = s_1\cdots s_n \in W$ and a 
  corresponding simple system $(W,S)$, and denote by $\Bruhat$ and 
  $\Bruhatdir$ the undirected and directed version of the Bruhat graph for 
  $(W,S)$.
  By Proposition~\ref{prop:directedpath}, it is left to show that any two directed 
  paths from~$e$ to~$c$ in $\Bruhatdir$ are in the same Hurwitz orbit.
  Let therefore
  $$e \edgedir t_1 \edgedir t_1t_2 \edgedir \ldots \edgedir t_1\cdots t_n = c$$
  be such a path.
  We have seen in Lemma~\ref{lemma:factorizationlength} that 
  $\ell(c) = \ell_T(c) = n$.
  It thus follows that $\ell(t_1\cdots t_i) = \ell_T(t_1\cdots t_i) = i$ for any~$i$.
  The strong exchange condition, see e.g.~\cite[Theorem~5.8]{Hu90}, then 
  yields that $t_1 \cdots t_{i+1}$ is obtained from $t_1\cdots t_i$ by adding a 
  single simple generator.
  Therefore, such a path is (bijectively) encoded by a permutation 
  $\pi = [\pi_1,\ldots,\pi_n]$ in which the simple generators are inserted into the 
  sequence.
  But given the factorization corresponding to such a path, it is straightforward 
  to see that the embedding of the permutation into the braid group 
  (by sending a simple transposition $(i,i+1)$ to the generator 
  $\sigma_i$ of~$B_n$) yields a braid that turns the given factorization into the 
  factorization $s_1\cdots s_n$.
  To this end, observe that given two factorizations encoded by two permutations $\pi_1$ and $\pi_2$ with $\ell(\pi_1) = \ell(\pi_2)+1$ and such that these differ only by a single simple transposition $\pi_2^{-1}\pi_1 = (i,i+1)$ for some index~$i$.
  Then the given factorizations are obtained from each other by applying the braid group generator~$\sigma_i$ to the factorization corresponding to $\pi_1$ to obtain the factorization corresponding to $\pi_2$.
  As the factorization corresponding to the identity permutation $[1,\ldots,n]$ is $(s_1,\ldots,s_n)$, the claim follows.
  As an example, consider the path
  $$e \edgedir s_2 \edgedir s_2 s_5 \edgedir s_2 s_3 s_5 
  \edgedir s_1 s_2 s_3 s_5 \edgedir s_1 s_2 s_3 s_4 s_5 = c.$$
  The corresponding factorization of~$c$ is given by
  $$c = s_2 \cdot s_5 \cdot s_5 s_3 s_5 \cdot s_5 s_3 s_2 s_1 s_2 s_3 s_5 
  \cdot s_5 s_4 s_5,$$
  and the permutation is $\pi = [2,5,3,1,4] = (1,2)(2,3)(4,5)(3,4)(2,3)$.
  On the other hand,
  $$\sigma_1 \sigma_2 \sigma_4 \sigma_3 \sigma_2(s_2,s_5,s_5 s_3 s_5,s_5 
  s_3 s_2 s_1 s_2 s_3 s_5,s_5 s_4 s_5) = (s_1,s_2,s_3,s_4,s_5),$$
  as desired.
\end{proof}

\end{document}